\theoremstyle{plain}
\newtheorem{theorem}{Theorem}[section]
\newtheorem{lemma}[theorem]{Lemma}
\newtheorem{proposition}[theorem]{Proposition}
\theoremstyle{definition}
\newtheorem{definition}[theorem]{Definition}
\theoremstyle{remark}
\numberwithin{equation}{section}
\newenvironment{acknowledgement}[1][Acknowledgement
]{\begin{trivlist} \item[\hskip \labelsep {\bfseries
#1}]}{\end{trivlist}}
\begin{document}
\title{An Inverse Uniqueness in  Interior Transmission
Problem\\ and Its Eigenvalue Tunneling in  Penetrable Simple Domains}
\author{Lung-Hui Chen$^1$}\maketitle\footnotetext[1]{Department of
Mathematics, National Chung Cheng University, 168 University Rd.
Min-Hsiung, Chia-Yi County 621, Taiwan. Email:
mr.lunghuichen@gmail.com;\,lhchen@math.ccu.edu.tw. Fax:
886-5-2720497.}
\begin{abstract}
We study an inverse uniqueness with a knowledge of spectral data in the interior transmission problem defined by an index of refraction in a simple domain.
We  expand the solution in such a domain into a series of one dimensional problems. For each one dimensional problem, we apply a value distribution theory in complex analysis to describe the eigenvalues of the system. By the orthogonality of the one dimensional system, we consider the uniqueness on the perturbation along each given incident angle.
\\MSC: 35P25/35R30/34B24.
\\Keywords: inverse problem/back-scattering/interior transmission eigenvalue/eigenvalue tunneling/value distribution theory/Rellich's lemma.
\end{abstract}
\section{Introduction and Preliminaries}
In this paper, we study the inverse spectral problem in the following homogeneous interior
transmission problem
\begin{eqnarray}\label{1.1}
\left\{%
\begin{array}{ll}
    \Delta w+k^2n(x)w=0,  & \hbox{ in }D; \vspace{3pt}\\\vspace{3pt}
    \Delta v+k^2v=0, & \hbox{ in }D; \\\vspace{3pt}
    w=v, & \hbox{ on }\partial D; \\\vspace{3pt}
    \frac{\partial w}{\partial \nu}=\frac{\partial v}{\partial \nu},& \hbox{ on }\partial D,\\
\end{array}%
\right.
\end{eqnarray}
where $\nu$ is the unit outer normal; $D$ is a simple domain in $\mathbb{R}^3$ containing the origin with $\mathcal{C}^2$-boundary $\partial D$; $n(0)=1$,
$n(x)\in\mathcal{C}^2(\mathbb{R}^3)$; $n(x)>0$, for $x\in D$; $n(x)=1$, for $x\notin D$.  The equation~(\ref{1.1}) is called the homogeneous interior transmission eigenvalue problem.
We say $k\in\mathbb{C}$ is an interior transmission eigenvalue of~(\ref{1.1}) if there is a nontrivial pair of solution $(w,v)$ such that $w,v\in L^2(D)$,
$w-v\in H^2_0(D)$.

\par
The problem~(\ref{1.1}) occurs naturally when one considers the scattering of the plane waves by certain inhomogeneity inside the domain $D$, defined by an index of refraction in many models.
The interior transmission eigenvalues play a role in the inverse
scattering theory both in numerical computation and in theoretical
scattering theory. For the origin of interior transmission eigenvalue problem, we refer to Colton and Monk \cite{Colton}, and Kirsch \cite{Kirsch86}. For theoretical study and historic literature, we refer to \cite{Colton3,Colton2,Liu,Rynne}. The one-to-one correspondence between the  radiating solution of the Helmholtz equation and its far field patterns is well-known, but the denseness of the far field patterns remains further research topics.  It is also another subject of research interest to study the existence or location of the eigenvalues \cite{Cakoni,Cakoni2,Chen,Colton,Colton2,Kirsch,L,La,Mc}. It is expected to find a Weyl's type of asymptotics for the interior transmission eigenvalues. In that case, the distribution of the  eigenvalues is directly connected to certain invariant characteristics on the scatterer. In this regard, we apply the methods from entire function theory \cite{Cartwright, Cartwright2,Koosis, Levin,Levin2} to study the distributional laws of the eigenvalues. We also refer to \cite{Sun} for the reconstruction of the interior transmission eigenvalues, and \cite{Sun2} for a numerical description on  the distribution of the eigenvalues.  For the non-symmetrically-stratified medium, there are not too many known results \cite{Cakoni2,Colton3}. In this paper, we mainly follow the methods in \cite{Aktosun,Aktosun2,Chen,Chen3,Chen5} to study the non-symmetrical scatterers as a series of one-dimensional problems along the rays starting from the origin. The analysis along each ray possibly has multiple intersection points with $\partial D$, so we expect certain tunneling effect in a penetrable domain. In this paper, the new perspective is the following asymptotic analysis inside and outside the perturbation.

\par
We apply Rellich's expansion in scattering theory. Firstly we expand the solution $(v,w)$ of~(\ref{1.1}) in two series of spherical harmonics by Rellich's lemma \cite[p.\,32,]{Colton2}:
\begin{eqnarray}\label{1.3}
&&v(x;k)=\sum_{l=0}^{\infty}\sum_{m=-l}^{m=l}a_{l,m}j_l(k r)Y_l^m(\hat{x});\\
&&w(x;k)=\frac{1}{r}\sum_{l=0}^{\infty}\sum_{m=-l}^{m=l}b_{l,m}y_l(r)Y_l^m(\hat{x}), \label{1.4}
\end{eqnarray}
where $r:=|x|$, $0\leq r<\infty;$ $\hat{x}=(\theta,\varphi)\in\mathbb{S}^2$;
$j_l$ is the spherical Bessel function of first kind of order $l$. The summations converge uniformly and absolutely on suitable compact subsets in $|r|\geq R_0$, with some sufficiently large $R_0>0$. We note that the expansion~(\ref{1.3}) and~(\ref{1.4}) hold for Helmholtz equation without radial symmetry assumption on the perturbation \cite[p.\,31,\,Lemma\, 2.11]{Colton2}.
The uniqueness of the expansions outside $D$ are given by Rellich's lemma \cite[p.\,32]{Colton2}.
Particularly, the spherical harmonics
\begin{equation}\label{S}
Y_l^m(\theta,\varphi):=\sqrt{\frac{2l+1}{4\pi}\frac{(l-|m|)!}{(l+|m|)!}}P_l^{|m|}(\cos\theta)e^{im\varphi},
\,m=-l,\ldots,l;\,l=0,1,2,\ldots,
\end{equation}
form a complete orthonormal system in $\mathcal{L}^2(\mathbb{S}^2)$. Here,
\begin{equation}
P_n^m(t):=(1-t^2)^{m/2}\frac{d^mP_n(t)}{dt^m},\,m=0,1,\ldots,n,
\end{equation}
where the Legendre polynomials $P_n$, $n=0,1,\ldots,$ form a complete orthogonal system in $L^2[-1,1]$. We refer this to \cite[p.\,25]{Colton2}.
By the orthogonality of the spherical harmonics, the functions in the form
\begin{eqnarray}
\left\{
  \begin{array}{ll}
&v_{l,m}(x;k):=a_{l,m}j_l(k r)Y_l^m(\hat{x});
\vspace{8pt}\\\label{16}
&w_{l,m}(x;k):=\frac{b_{l,m}y_l(r;k)}{r}Y_l^m(\hat{x})
  \end{array}
\right.
\end{eqnarray}
satisfy the first two equations in~(\ref{1.1}) independently \cite[p.\,227]{Colton2} for $r\geq R_0$ .

\par
Now we consider $k\in\mathbb{C}$, $a_{l,m}$, and $b_{l,m}$, if any,  satisfies the following independent system for  $l\in\mathbb{N}_0$ and $-l\leq m\leq l$,
\begin{eqnarray}\vspace{5pt}\label{18}
\left\{
  \begin{array}{ll}
    &a_{l,m}j_l(k r)|_{r=R_0}=b_{l,m}\frac{y_l(r;k)}{r}|_{r=R_0}; \vspace{8pt}\\
    &a_{l,m}[j_l(k r)]'|_{r= R_0}=b_{l,m}[\frac{y_l(r;k)}{r}]'|_{r= R_0}.
  \end{array}
\right.
\end{eqnarray}
In terms of elementary linear algebra, the existence of the coefficients $a_{l,m}$ and $b_{l,m}$ are equivalent to finding the zeros of the following functional determinant:
\begin{eqnarray}\label{1.12}
D_{l}(k;R_0)&:=&\det\left(%
\begin{array}{cc}
  j_l(k r)|_{r=R_0}  & -\frac{y_l(r;k)}{r}|_{r=R_0}\vspace{6pt}\\
  \{j_l(k r)\}'|_{r=R_0}& -\{\frac{y_l(r;k)}{r}\}'|_{r=R_0}\\
\end{array}%
\right)\\\vspace{12pt}\label{110}
&=&-j_l(k R_0)\frac{{y}'_l(R_0;k)}{R_0}+j_l(k R_0)\frac{y_l(R_0;k)}{R_0^2}+k {j}'_l(R_0)\frac{y_l(R_0;k)}{R_0},
\end{eqnarray}
in which the system is independent of $m$ and $\hat{x}$. The forward problem  describes the distribution of the zeros of $D_{l}(k;R_0)$, while the inverse problem  specifies the index of refraction $n$ by the zero set. In \cite{Chen,Chen3,Chen5,Mc}, we have discussed the methods to find the zeros of $D_{l}(k;R_0)$.

\par
Let $k\in\mathbb{C}$ be a possible eigenvalue of~(\ref{18}). Applying the analytic continuation of the Helmholtz equation and Rellich's lemma \cite[p.\,32,\,33;\,p.\,222]{Colton2},  the solutions parameterized by $k$ solve
\begin{eqnarray}\label{113}
\left\{%
\begin{array}{ll}
    w(x;k)\equiv v(x;k),&x\notin D; \vspace{8pt}\\
\frac{\partial w(x;k)}{\partial \nu}\equiv \frac{\partial v(x;k)}{\partial \nu},&x\notin D,\\
\end{array}%
\right.
\end{eqnarray}
in the simple domain $D$.
 Conversely,~(\ref{113}) implies~(\ref{1.12}) by the uniqueness of the system~(\ref{16}) at $r=R_0$.

\par
We note that the representation~(\ref{1.3}) and~(\ref{1.4}) can merely hold outside $|x|\geq R_0$, so we seek to extend the representation into $|x|\leq R_0$.
Let $\hat{x}_1\in\mathbb{S}^2$ be a given incident direction satisfying the following geometric condition:
\begin{equation}
\mbox{the line segment from }(R_0,\hat{x}_1) \mbox{ to } (r_1,\hat{x}_1), \,r_1\leq R_0, \mbox{lies outside }D\mbox{ with }(r_1,\hat{x}_1)\in\partial D.
\end{equation}
For this $\hat{x}_1$, we extend each Fourier coefficient $y_l(r;k)$ with  $k\in\mathbb{C}$ determined by system~(\ref{18}) for all $l$, toward the origin until it meets the boundary $\partial D$ at $(r_1,\hat{x}_1)$. Along the given $\hat{x}_1$, we apply the differential operator $\Delta+k^2n$ with
\begin{equation}
\Delta=\frac{1}{r^2}\frac{\partial}{\partial r}r^2\frac{\partial}{\partial r}+\frac{1}{r^2\sin{\varphi}}\frac{\partial}{\partial \varphi}\sin\varphi\frac{\partial}{\partial \varphi}
+\frac{1}{r^2\sin^2{\varphi}}\frac{\partial^2}{\partial \theta^2}
\end{equation}
to $\{w_{l,m}(x)\}$, which accordingly can solve the problem~(\ref{1.1}) with the manmade index of refraction $n(x)=n(r\hat{x})=n(r\hat{x}_1)$ for all $\hat{x}\in\mathbb{S}^2$. More importantly, the analytic continuation~(\ref{113}) and the interior transmission condition imply the following ODE:
\begin{eqnarray}\label{118}
\left\{
  \begin{array}{ll}
    y_l''(r;k)+(k^2n(r\hat{x}_1)-\frac{l(l+1)}{r^2})y_l(r;k)=0;\vspace{8pt}\\
    D_{l}(k;r_1)=0.
  \end{array}
\right.
\end{eqnarray} 
\par
If there is merely one intersection point for $[0,R_0]\times \hat{x}_1$ with $\partial D$, then we set  the initial conditions to~(\ref{118})
\begin{equation}\label{119}
y_0(0;k)=0,\, y_0'(0;k)=1; \,y_l(0;k)=0,\, y_l'(0;k)=0,\,l\in\mathbb{N}.
\end{equation}
That is,
\begin{eqnarray}
D_{l}(k;0)=0.\label{1.18}
\end{eqnarray}
We refer to the details to \cite{Aktosun,Colton2,Mc}.
For $l\geq0$, we can take $a_{l,m}=b_{l,m}=1$ in~(\ref{18}) by the uniqueness implied by~(\ref{119}).
Now, the uniqueness of the ODE~(\ref{18}) is valid up to the boundary $\partial D$,
\begin{eqnarray}\label{115}\vspace{5pt}
\left\{
  \begin{array}{ll}
    &a_{l,m}j_l(k r)=b_{l,m}\frac{y_l(r;k)}{r}; \vspace{8pt}\\
    &a_{l,m}[j_l(k r)]'=b_{l,m}[\frac{y_l(r;k)}{r}]',\,r=r_1,\,\hat{x}_1\in\mathbb{S}^2.
  \end{array}
\right.
\end{eqnarray}
That is, $D_{l}(k;r_1)=D_{l}(k;R_0)$ by the uniqueness of ODE~(\ref{118}) along the line segment $(R_0,\hat{x}_1)$ to $(r_1,\hat{x}_1)$.

\par
In general, the solution $y_l(r)$ depends on the incident direction $\hat{x}$ whenever entering the perturbation, so we denote the extended solution of~(\ref{118}) as $\hat{y}_l(r;k)$, and the functional determinant as $\hat{D}_{l}(k;r_1)$. Thus,~(\ref{118}) is rephrased as
\begin{eqnarray}\label{1.10}
\left\{
  \begin{array}{ll}
    \hat{y}_l''(r;k)+(k^2n(r\hat{x})-\frac{l(l+1)}{r^2})\hat{y}_l(r;k)=0;\vspace{8pt}\\
    \hat{D}_{l}(k;0)=0,\,\hat{D}_{l}(k;r_1)=0.
  \end{array}
\right.
\end{eqnarray}
The eigenvalues of~(\ref{1.10}) are discussed in \cite{Aktosun,Chen,Chen3,Chen5} by the singular Sturm-Liouville theory in \cite{Carlson,Carlson2,Carlson3}.
\par
However, the domain $D$ is not starlike in general. Instead of~(\ref{115}), we now ask for any $k\in\mathbb{C}$ such that, the following system holds
\begin{eqnarray}\label{1.13}\vspace{5pt}
\left\{
  \begin{array}{ll}
    &\hat{a}_{l,m}j_l(k r)|_{r= \hat{r}}=\hat{b}_{l,m}\frac{\hat{y}_l(r;k)}{r}|_{r= \hat{r}}; \vspace{10pt}\\
    &\hat{a}_{l,m}[j_l(k r)]'|_{r= \hat{r}}=\hat{b}_{l,m}[\frac{\hat{y}_l(r;k)}{r}]'|_{r= \hat{r}},\, \hat{r}\in\hat{R},
  \end{array}
\right.
\end{eqnarray}
in which $\hat{R}$ is the intersection set along the incident angle $\hat{x}$ defined by
\begin{eqnarray}\label{boundary}
\hat{R}:=\{\hat{r}|\, (\hat{r},\hat{x})\in\partial D\}=\{\hat{r}_1,\ldots,\hat{r}_{\hat{M}}\},
\end{eqnarray}
and $\hat{y}_l(r;k)$ are defined by the solutions of first line of equation~(\ref{118}) with initial condition~(\ref{1.13}).

 In general, we assume $\hat{R}$ to be a finite discrete set and $\hat{r}_1<\hat{r}_2<\ldots<\hat{r}_{\hat{M}}$. In the case that $(\alpha,\hat{x})$, $(\beta,\hat{x})$, and $(\gamma,\hat{x})$ are any three consecutive points along the incident direction $\hat{x}$. Whenever $(\beta,\hat{x})$ is a tangent point at the boundary, we disregard it and consider  the line segment from $(\alpha,\hat{x})$ to $(\gamma,\hat{x})$ as  either completely inside or outside the perturbation. Without loss of generality, we assume that $\hat{R}$ contains no tangent point. See Fig.~\ref{simple}.

\par
Firstly starting with the first segment into the perturbation, we discuss the well-posedness of the initial value problem starting at $\hat{r}_{\hat{M}}$:
\begin{eqnarray}\label{1.22}
\left\{
  \begin{array}{ll}
    \hat{y}_l''(r)+(k^2n(r\hat{x})-\frac{l(l+1)}{r^2})\hat{y}_l(r)=0,\,\hat{r}_{\hat{M}-1}\leq r\leq\hat{r}_{\hat{M}};\vspace{8pt}\\
    \hat{D}_{l}(k;\hat{r}_{\hat{M}})=0,
  \end{array}
\right.
\end{eqnarray}
which has an unique solution inward  to $\hat{r}_{\hat{M}-1}$ given $j_l(k\hat{r})$ as a known function. The behavior of the solution is understood by the singular Sturm-Liouville theory in Section 2.  Because $\hat{r}_{\hat{M}}$ is the first intersection, $w_{l,m}(\hat{x}\hat{r}_{\hat{M}};k)=v_{l,m}(\hat{x}\hat{r}_{\hat{M}};k)$,  the uniqueness of~(\ref{1.22}) holds to the $\hat{r}_{\hat{M}-1}$ for  $k\in\mathbb{C}$. Because of~(\ref{113}) and the construction of~(\ref{16}), the unique analytic continuation of $w_{l,m}(x,k)$ holds outside $D$ as well as along the $\hat{x}$ inside $D$. Thus, $\hat{D}_{l}(k;\hat{r}_{\hat{M}-1})=0$ holds by analytic continuation of the Helmholtz equation. More importantly, it filters out a discrete set of eigenvalues of
\begin{eqnarray}\label{223}
\left\{
  \begin{array}{ll}
    \hat{y}_l''(r)+[k^2n(r\hat{x})-\frac{l(l+1)}{r^2}]\hat{y}_l(r)=0,\,\hat{r}_{\hat{M}-1}\leq r\leq\hat{r}_{\hat{M}};\vspace{8pt}\\
    \hat{D}_{l}(k;\hat{r}_{\hat{M}})=0,\,\hat{D}_{l}(k;\hat{r}_{\hat{M}-1})=0.
  \end{array}
\right.
\end{eqnarray}
Let $k^T$ be one of its eigenvalues. Leaving the perturbation at $\hat{r}_{\hat{M}-1}$, the  $k^T$ defines another ODE system:
\begin{eqnarray}\label{224}
\left\{
  \begin{array}{ll}
    \hat{y}_l''(r)+[k^2n(r\hat{x})-\frac{l(l+1)}{r^2}]\hat{y}_l(r)=0,\,\hat{r}_{\hat{M}-2}\leq r\leq\hat{r}_{\hat{M}-1};\vspace{8pt}\\
  \hat{D}_{l}(k;\hat{r}_{\hat{M}-1})=0,\,\hat{D}_{l}(k;\hat{r}_{\hat{M}-2})=0,
  \end{array}
\right.
\end{eqnarray}
in which $\hat{D}_{l}(k^T;\hat{r}_{\hat{M}-2})=0$ holds due to the analytic continuation of Helmholtz equation.
The same $k^T$ appears at $\hat{r}_{\hat{M}-2}$ and ready to define another new ODE.
\begin{eqnarray}\label{225}
\left\{
  \begin{array}{ll}
    \hat{y}_l''(r)+[k^2n(r\hat{x})-\frac{l(l+1)}{r^2}]\hat{y}_l(r)=0,\,\hat{r}_{\hat{M}-3}\leq r\leq\hat{r}_{\hat{M}-2};\vspace{8pt}\\
  \hat{D}_{l}(k;\hat{r}_{\hat{M}-2})=0,\,\hat{D}_{l}(k;\hat{r}_{\hat{M}-3})=0.
  \end{array}
\right.
\end{eqnarray}
By analytic continuation, $k^T$ satisfies~(\ref{223}),~(\ref{224}), and~(\ref{225}), and  appears at $\hat{r}_{\hat{M}-3}$ as well. More importantly, the system~(\ref{224}) produces new eigenvalues that appear at $\hat{r}_{\hat{M}-3}$ and
consecutively into each intersection interval. Thus, we have
\begin{eqnarray}\label{226}
\left\{
  \begin{array}{ll}
    \hat{y}_l''(r)+[k^2n(r\hat{x})-\frac{l(l+1)}{r^2}]\hat{y}_l(r)=0,\,\hat{r}_{0}:=0<r<\infty;\vspace{8pt}\\
    \hat{D}_{l}(k;\hat{r}_{0})=0,\, \hat{D}_{l}(k;\hat{r}_{1})=0,\,\ldots,\hat{D}_{l}(k;\hat{r}_{\hat{M}})=0.
  \end{array}
\right.
\end{eqnarray}
Each element of the zero set of $\hat{D}_{l}(k;\hat{r}_j)$ defines an initial value problem in the neighboring interval. There is an uniqueness to the solution of~(\ref{226}) by the piecewise construction as shown above, and we call the extended solution $\hat{y}(r;k)$ for each $k$ the eigenvalue tunneling in interior transmission problem. Such a construction can be set to initiate at $\hat{r}_{0}$ and tunnels to the infinity. We have already discussed the simple case as in the starlike domains \cite{Chen,Chen3,Chen5}:
\begin{eqnarray}\label{125}
\left\{
  \begin{array}{ll}
    \hat{y}_l''(r)+(k^2n(r\hat{x})-\frac{l(l+1)}{r^2})\hat{y}_l(r)=0,\,\hat{r}_{0}<r<\infty;\vspace{8pt}\\
    \hat{D}_{l}(k;\hat{r}_{0})=0,\, \hat{D}_{l}(k;\hat{r}_{1})=0.
  \end{array}
\right.
\end{eqnarray}
With the initial condition $\hat{D}_{l}(k;0)=0$, the function $\hat{D}_{l}(k;\hat{r}_{1})$ is an entire function of exponential type \cite{Carlson,Carlson2,Carlson3,Chen,Chen3,Chen5,Po}. The eigenvalues are its zeros. Without loss of generality, we take $\hat{r}_0$ as the reference point. Thus, the eigenvalues of~(\ref{125}) form a discrete set in $\mathbb{C}$, accumulate into the eigenvalues of~(\ref{226}), and tunnel to the infinity.

\par
Conversely, once we find an eigenvalue of~(\ref{226}) for some  $l$  along some $\hat{x}$, it solves~(\ref{18}) by the uniqueness of ODE and then~(\ref{113}) by the analytic continuation of Helmholtz equation.
Whenever we collect all such eigenvalues from each incident $\hat{x}\in\mathbb{S}^2$, they are interior transmission eigenvalues of~(\ref{1.1}) by analytic continuation. The geometric characteristics of the perturbation are connected by rays of ODE system to the far fields.

\begin{theorem}\label{13}
Let $n^1$, $n^2$ be two unknown indices of refraction as assumed in~(\ref{1.1}). If they have the same set of eigenvalues, then $n^1\equiv n^2$.
\end{theorem}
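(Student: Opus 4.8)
The plan is to reduce the three-dimensional inverse problem to a family of one-dimensional inverse spectral problems, one for each incident direction $\hat{x}$, and to recover $n$ ray by ray. First I would fix $\hat{x}\in\mathbb{S}^2$ and exploit the orthonormality of the spherical harmonics $Y_l^m$ to decouple the spectrum: by the modal construction~(\ref{16})--(\ref{18}) an interior transmission eigenvalue is precisely a $k$ at which some modal determinant vanishes, and by the analytic-continuation/tunneling argument culminating in~(\ref{226}) the object carrying the interior information along $\hat{x}$ is the fully tunneled determinant $\hat{D}_l(k;0)$. Writing $\hat{D}_l^{1},\hat{D}_l^{2}$ for the determinants built from $n^1,n^2$, the hypothesis that the two indices share the same eigenvalue set should force, for every $l$, the equality of the zero sets of $\hat{D}_l^{1}$ and $\hat{D}_l^{2}$ (counted with multiplicity). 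Separating the single global spectrum into its modal pieces is itself nontrivial and I would handle it via the distinct asymptotic densities of the zeros for different $l$.

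The second step is an entire-function identification. By the singular Sturm--Liouville theory invoked for~(\ref{125})--(\ref{226}) (see \cite{Carlson,Carlson2,Carlson3}), each $\hat{D}_l^{j}(k)$ is entire of exponential type, the type being the optical path length $\tau^{j}(\hat{x}):=\int\sqrt{n^{j}(s\hat{x})}\,ds$ accumulated over the penetrable segments along the ray. Hadamard factorization then gives that two order-one entire functions with identical zeros differ by a factor $e^{\alpha k+\beta}$. I would pin this factor down by value-distribution estimates \cite{Levin,Levin2}: comparing the Phragm\'en--Lindel\"of indicators forces $\tau^{1}(\hat{x})=\tau^{2}(\hat{x})$ and $\alpha=0$, while the normalization inherited from the free part $j_l$ in~(\ref{110}), which does not involve $n$, forces $\beta=0$. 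Hence $\hat{D}_l^{1}\equiv\hat{D}_l^{2}$ for every $l$.

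The third step recovers $n$ along the ray. Having $\hat{D}_l^{1}\equiv\hat{D}_l^{2}$ for all $l$ means the characteristic functions of the radial operators in~(\ref{226}) coincide. Applying the Liouville change of variable $\xi=\int_0^r\sqrt{n(s\hat{x})}\,ds$ reduces the first line of~(\ref{226}) to a standard Schr\"odinger form on $[0,\tau^{j}(\hat{x})]$; the already-established equality $\tau^{1}=\tau^{2}$ guarantees the transformed domains agree, and the classical inverse uniqueness for the reduced singular Sturm--Liouville problem then determines the transformed potential and, inverting the Liouville map, the profile $n(\cdot\,\hat{x})$. Since $\hat{x}\in\mathbb{S}^2$ is arbitrary and $n\equiv 1$ off $D$, this yields $n^1\equiv n^2$ on all of $\mathbb{R}^3$.

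I expect the main obstacle to be the non-starlike geometry. When the ray $\hat{x}$ meets $\partial D$ at several points $\hat{r}_1<\cdots<\hat{r}_{\hat M}$, the characteristic function is not a single Sturm--Liouville determinant but the tunneled object assembled across alternating interior and exterior segments through the matching conditions $\hat{D}_l(k;\hat{r}_j)=0$ in~(\ref{226}). Two difficulties arise: (i) one must show this multi-segment determinant is still entire of exponential type with exponential type \emph{additive} over the penetrable segments, so that its zeros, the tunneling eigenvalues that accumulate as described after~(\ref{125}), genuinely determine it through Hadamard factorization; and (ii) one must disentangle the spectral contribution of each interior interval from the global spectrum, since the tunneling couples the segments through their shared eigenvalue $k^T$. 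Verifying that the inverse step recovers $n$ on each interior interval separately, rather than only the aggregate optical path, is the crux of the argument.
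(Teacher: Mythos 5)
Your overall strategy --- decompose along rays $\hat{x}\in\mathbb{S}^2$, reduce to one-dimensional inverse spectral problems, and recover $n$ segment by segment --- is the same as the paper's, but your mechanism for the one-dimensional uniqueness step is genuinely different. You propose first to prove $\hat{D}_l^{1}\equiv\hat{D}_l^{2}$ as entire functions (Hadamard factorization plus an indicator/type comparison to kill the factor $e^{\alpha k+\beta}$), and only then to invert a Liouville-transformed Schr\"odinger problem on $[0,\tau(\hat{x})]$. The paper skips the entire-function identification entirely: it reads the vanishing of $\hat{D}_l(k;\hat{r}_j)$ and $\hat{D}_l(k;\hat{r}_{j+1})$ as two-spectra data for the singular Bessel operator on $[\hat{r}_j,\hat{r}_{j+1}]$ and invokes Carlson's Borg--Levinson theorem \cite[Theorems 1.2 and 1.3]{Carlson2} directly, which accommodates the $l(l+1)/r^2$ singularity without a Liouville change of variable; the non-starlike geometry is handled by induction on $j$, with Proposition~\ref{310} (eigenvalue tunneling) propagating the common eigenvalue into every intersection interval. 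Your route would buy an explicit identification of the characteristic functions (useful for stability or reconstruction), at the price of having to control the type and the Hadamard factor of the multi-segment determinant --- precisely the difficulty you flag in your point (i). Carlson's theorem also recovers the profile on each interval $[\hat{r}_j,\hat{r}_{j+1}]$ individually, which is how the paper sidesteps your worry that only the aggregate optical path is determined.

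The substantive issue is the one you name but do not close: passing from equality of the single global eigenvalue set of~(\ref{1.1}) to equality, for each fixed $l$, each fixed ray $\hat{x}$, and each fixed interval, of the zero sets of the corresponding modal determinants. A one-sentence appeal to ``distinct asymptotic densities for different $l$'' does not do this --- the leading densities in Lemma~\ref{22} are in fact independent of $l$, so the modal spectra interleave rather than separate, and the interval-by-interval disentanglement you identify as the crux in point (ii) is likewise unresolved in your sketch. Be aware that the paper's proof asserts this separation in one line (``the spectrum satisfies $\hat{D}_l(k;\hat{r})=0$ for all $\hat{r}$ and some $l$ for any $\hat{x}$'') without further justification, so on this point your proposal is no less complete than the printed argument; but as written it is a gap in both, and any honest completion of either proof must supply it.
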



\section{Asymptotic Expansions and Cartwright-Levinson Theory}
To study the functional determinants $\hat{D}_l(k;\hat{r})$, we collect the following asymptotic behaviors of $\hat{y}_l(r;k)$ and $\hat{y}_l'(r;k)$. For $l=0$, we apply the
 Liouville transformation \cite{Carlson,Carlson2,Carlson3,Colton3,Colton2,Po}:
\begin{eqnarray}\label{L}
&\hat{z}_0(\hat{\xi}):=[n(r\hat{x})]^{\frac{1}{4}}\hat{y}_0(r;k),
\end{eqnarray}
where
\begin{equation}
\hat{B}(r):=\hat{\xi}(r)=  \int_0^r[n(\rho\hat{x})]^{\frac{1}{2}}d\rho,\,0\leq r\leq\hat{r}_1.
\end{equation}
Here we recall that $n$ is $1$ outside $D$.
For simplicity of the notation, we drop all the  superscripts about $\hat{x}$ whenever the context is clear.
\begin{definition}
Let $f(z)$ be an integral function of order $\rho$, and let
$N(f,\alpha,\beta,r)$ denote the number of the zeros of $f(z)$
inside the angle $[\alpha,\beta]$ and $|z|\leq r$. We define the
density function as
\begin{equation}\label{Den}
\Delta_f(\alpha,\beta):=\lim_{r\rightarrow\infty}\frac{N(f,\alpha,\beta,r)}{r^{\rho}},
\end{equation}
and
\begin{equation}
\Delta_f(\beta):=\Delta_f(\alpha_0,\beta),
\end{equation}
with some fixed $\alpha_0\notin E$ such that $E$ is at most a
countable set.
\end{definition}
\begin{lemma}\label{22}
The functional determinant $\hat{D}_l(k;r)$ is of order one and of type $r+\hat{B}(r),\,\hat{r}_0\leq r\leq\hat{r}_1$. In particular,
\begin{equation}\label{222}
\Delta_{\hat{D}_l(k;r)}(-\epsilon,\epsilon)=\frac{r+\hat{B}(r)}{\pi}.
\end{equation}
\end{lemma}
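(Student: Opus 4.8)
The plan is to regard $\hat{D}_l(k;r)$, for each fixed $r\in[\hat{r}_0,\hat{r}_1]$, as an entire function of the spectral variable $k$ and to read off its order, type and indicator diagram from the large-$|k|$ asymptotics of the two ingredients appearing in \eqref{110}: the spherical Bessel factor $j_l(kr)$ together with its $r$-derivative, and the Sturm--Liouville solution $\hat{y}_l(r;k)$ together with $\hat{y}_l'(r;k)$. Once the indicator is in hand I would invoke the Cartwright--Levinson theory to convert the type into the zero-counting density \eqref{Den}.

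First I would record the classical asymptotics
\[
j_l(kr)=\frac{1}{kr}\,\sin\!\Big(kr-\tfrac{l\pi}{2}\Big)\bigl(1+O(|k|^{-1})\bigr),\qquad
j_l'(kr)=\frac{1}{kr}\,\cos\!\Big(kr-\tfrac{l\pi}{2}\Big)\bigl(1+O(|k|^{-1})\bigr),
\]
valid as $|k|\to\infty$ uniformly in the plane, which exhibit $j_l(k\,\cdot\,)$ as entire of exponential type $r$ in $k$. To treat $\hat{y}_l(r;k)$ I would use the Liouville transformation \eqref{L}: writing $\hat{z}_0=[n]^{1/4}\hat{y}_0$ and $\hat{\xi}=\hat{B}(r)=\int_0^r\sqrt{n}\,d\rho$ converts the $l=0$ equation of \eqref{118} into $\hat{z}_0''+(k^2-q(\hat{\xi}))\hat{z}_0=0$ with bounded potential $q$, and the Liouville--Green representation yields
\[
\hat{y}_0(r;k)=\frac{\sin\!\bigl(k\hat{B}(r)\bigr)}{k\,[n(r\hat{x})]^{1/4}}\bigl(1+O(|k|^{-1})\bigr),\qquad
\hat{y}_0'(r;k)=[n(r\hat{x})]^{1/4}\cos\!\bigl(k\hat{B}(r)\bigr)\bigl(1+O(|k|^{-1})\bigr),
\]
uniformly on $[\hat{r}_0,\hat{r}_1]$, so that $\hat{y}_0(r;\cdot)$ is entire of exponential type $\hat{B}(r)$. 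For $l\ge1$ the centrifugal term $l(l+1)/r^2$ is absorbed into $q$ after the same change of variables (using the regular--singular normalization near $r=0$ when $\hat{r}_0=0$), leaving the type $\hat{B}(r)$ and the phase $k\hat{B}(r)$ unchanged.

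Substituting these expansions into \eqref{110} and abbreviating $\alpha=kr-\tfrac{l\pi}{2}$, $\beta=k\hat{B}(r)$, $A=[n(r\hat{x})]^{1/4}$, the two surviving $O(|k|^{-1})$ contributions combine, through the product-to-sum identities, into
\[
\hat{D}_l(k;r)=\frac{1}{2kr^2}\Bigl[\bigl(A^{-1}-A\bigr)\sin(\alpha+\beta)-\bigl(A+A^{-1}\bigr)\sin(\alpha-\beta)\Bigr]\bigl(1+O(|k|^{-1})\bigr),
\]
where $\alpha+\beta=k(r+\hat{B}(r))-\tfrac{l\pi}{2}$ and $\alpha-\beta=k(r-\hat{B}(r))-\tfrac{l\pi}{2}$. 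Since $\hat{B}(r)>0$ we have $r+\hat{B}(r)>|r-\hat{B}(r)|$, and since $n(r\hat{x})\neq1$ inside the perturbation the coefficient $A^{-1}-A$ is nonzero; hence the $\sin(\alpha+\beta)$ term governs the growth. This shows $\hat{D}_l(\,\cdot\,;r)$ is entire of order exactly one, of exponential type $r+\hat{B}(r)$, with indicator $h(\theta)=(r+\hat{B}(r))|\sin\theta|$, so that $h(\pi/2)=h(-\pi/2)=r+\hat{B}(r)$. On the real $k$-axis the same asymptotics give $\hat{D}_l(x;r)=O(|x|^{-1})$, whence $\int_{\mathbb{R}}\log^+|\hat{D}_l(x;r)|(1+x^2)^{-1}\,dx<\infty$ and $\hat{D}_l(\,\cdot\,;r)$ lies in the Cartwright class. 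The Cartwright--Levinson theorem then asserts that almost all zeros cluster into the sector $(-\epsilon,\epsilon)$ about the positive real axis, with density $\tfrac{1}{2\pi}\bigl(h(\pi/2)+h(-\pi/2)\bigr)=\frac{r+\hat{B}(r)}{\pi}$, which is precisely \eqref{222}.

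The step I expect to be the main obstacle is securing the asymptotics of $\hat{y}_l(r;k)$ and $\hat{y}_l'(r;k)$ \emph{uniformly in the complex variable} $k$ (not merely for real $k$) and uniformly in $r\in[\hat{r}_0,\hat{r}_1]$, with the relative error $O(|k|^{-1})$ controlled throughout a full neighborhood of the imaginary axis, where the dominant factor $e^{|\mathrm{Im}\,k|\hat{B}(r)}$ lives; for $l\ge1$ this is compounded by the need to match the Liouville--Green form to the regular--singular behavior at $r=0$ when $\hat{r}_0=0$, so that the clean phase $k\hat{B}(r)$ and the nonvanishing amplitude coefficient $A^{-1}-A$ persist uniformly and the indicator and Cartwright-class estimates are legitimate.
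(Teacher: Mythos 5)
Your route is genuinely different from the paper's. The paper factors the determinant as $\hat D_l=\frac{kj_l'(kr)\hat y_l(r;k)}{r}\{\hat\alpha_l(k)+O(1/k)\}$, argues that the factor $\hat\alpha_l(k)+O(1/k)$ is bounded and bounded away from zero, and then adds the Lindel\"of indicators of $j_l'(kr)$ and $\hat y_l(r;k)$, relegating the degenerate case to $\hat\alpha_l\equiv0$ and Lemma~\ref{233}. You instead substitute explicit Liouville--Green asymptotics into~\eqref{110} and extract the dominant exponential by product-to-sum identities; this is more transparent and, if the uniformity in $\arg k$ that you flag can be secured, it gives the indicator and then~\eqref{222} by Cartwright--Levinson in a cleaner way than the paper's indicator-additivity argument.

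However, your own formula exposes a genuine gap that you dispose of with an assumption the paper does not make. The coefficient of the dominant term $\sin(\alpha+\beta)$, the only term of type $r+\hat B(r)$, is $A^{-1}-A$ with $A=[n(r\hat x)]^{1/4}$, and you assert it is nonzero ``since $n(r\hat x)\neq1$ inside the perturbation.'' The standing hypotheses are only $n>0$ in $D$, $n(0)=1$, $n\in\mathcal{C}^2(\mathbb{R}^3)$ and $n\equiv1$ off $D$; nothing forbids $n(r\hat x)=1$ at interior points, and at the endpoint $r=\hat r_1$ (the value actually used in the rest of the paper) the point $\hat r_1\hat x$ lies on $\partial D$, so continuity forces $n(\hat r_1\hat x)=1$ and $A^{-1}-A=0$ exactly. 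At such $r$ your leading term reduces to $-(A+A^{-1})\sin(\alpha-\beta)/(2kr^2)$, of type $|r-\hat B(r)|$, and because $n\in\mathcal{C}^2$ matches $1$ smoothly at $\partial D$ the next-order corrections do not obviously restore the $e^{|\mathrm{Im}\,k|(r+\hat B(r))}$ growth --- this is precisely the classical dichotomy in the spherically symmetric transmission problem, where the eigenvalue density degenerates when $n=1$ at the boundary. So as written your argument does not establish the type $r+\hat B(r)$ or the density~\eqref{222} at $r=\hat r_1$; you would need either an additional hypothesis on $n$ at the evaluation point or a higher-order analysis of the error terms there. Note also that your pointwise nondegeneracy condition $A\neq1$ is not the same as the paper's condition $\hat\alpha_l\not\equiv0$, so your case analysis cannot simply be patched by citing Lemma~\ref{233}.
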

\begin{proof}
We begin with~(\ref{1.12}).
\begin{eqnarray}
\hat{D}_l(k;r)
&=&-j_l(k r)\frac{{\hat{y}}'_l(r;k)}{r}+j_l(k r)\frac{\hat{y}_l(r;k)}{r^2}+k {j}'_l(k r)\frac{\hat{y}_l(r;k)}{r}\nonumber\\\vspace{15pt}\nonumber
&=&\frac{k{j}'_l(k r)\hat{y}_l(r;k)}{r}\{1-\frac{1}{k}\frac{j_l(k r)}{{j}'_l(k r)}\frac{{\hat{y}}'_l(r;k)}{\hat{y}_l(r;k)}
+\frac{1}{k r}\frac{j_l(k r)}{{j}'_l(k r)}\}\\
&=&\frac{k{j}'_l(k r)\hat{y}_l(r;k)}{r}\{\hat{\alpha}_l(k)+O(\frac{1}{k})\},\label{216}
\end{eqnarray}
in which
\begin{equation}
\hat{\alpha}_l(k):=1-\frac{1}{k}\frac{j_l(k r)}{{j}'_l(k r)}\frac{{\hat{y}}'_l(r;k)}{\hat{y}_l(r;k)};
\end{equation}
We have $\frac{j_l(k\hat{r})}{{j}'_l(k r)}=O(1)$ outside the zeros of ${j}'_l(k r)$;
similarly, $\frac{{\hat{y}}'_l(r;k)}{\hat{y}_l(r;k)}=O(k)$ outside the zeros of $\hat{y}_l(r;k)$. The term $\hat{\alpha}(k)$ is bounded and bounded away from zero outside the zeros of ${j}'_l(k r)$ and $\hat{y}_l(r;k)$ on real axis.

\par
Consequently, we can compute the Lindel\"{o}f's indicator function \cite{Chen,Chen3,Chen5,Levin,Levin2} for $\hat{D}_l(k)$ for~(\ref{216}):
\begin{equation}\label{2.16}
h_{\hat{D}_l(k)}(\theta)=h_{{j}'_l(k\hat{r})}(\theta)+h_{\hat{y}_l(\hat{r};k)}(\theta)=(\hat{r}+\hat{B}(\hat{r}))|\sin\theta|,\,\theta\in[0,2\pi],\,\hat{r}_0\leq \hat{r}\leq\hat{r}_1,
\end{equation}
in which $\hat{B}(\hat{r})= \int_0^{\hat{r}}[n(\rho\hat{x})]^{\frac{1}{2}}d\rho$, and we apply an inequality of Lindel\"{o}f's indicator function in \cite[p.\,51]{Levin}.

In case that $\hat{\alpha}_l(k)\equiv0$, instead of~(\ref{2.16}), we have
\begin{equation}
h_{\hat{D}_l(k)}(\theta)=h_{{j}_l(k\hat{r})}(\theta)+h_{\hat{y}_l(\hat{r};k)}(\theta)=(\hat{r}+\hat{B}(\hat{r}))|\sin\theta|,\,\theta\in[0,2\pi],\,\hat{r}_0\leq \hat{r}\leq\hat{r}_1,
\end{equation}
Referring Cartwright theory to \cite[p.\,251]{Levin}, we conclude that $\hat{D}_l(k)$ is of Cartwright's class, and the lemma is thus proven.
\end{proof}
\begin{lemma}\label{233}
The meromorphic function $\hat{\alpha}_l(k)\equiv0$ if and only if $n(r\hat{x})\equiv1$ along the incident angle $\hat{x}$.
\end{lemma}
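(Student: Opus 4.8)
The plan is to recognise the functional determinant as a Wronskian and to reduce the lemma to the linear dependence of the interior and free radial profiles. Writing out (\ref{110}) one sees that
\[
\hat D_l(k;r)=\frac{\hat y_l(r;k)}{r}\,\frac{d}{dr}j_l(kr)-j_l(kr)\,\frac{d}{dr}\frac{\hat y_l(r;k)}{r}=W\!\left[\hat y_l/r,\,j_l(kr)\right]\!(r),
\]
the Wronskian in $r$ of the two profiles entering (\ref{16}). In the factorisation (\ref{216}) the coefficient $\hat\alpha_l(k)$ is exactly the term that stops controlling the growth of $\hat D_l$ once these profiles become proportional, so I would establish the equivalent and cleaner statement that $\hat D_l(k;\cdot)\equiv0$ in $k$ precisely when $n\equiv1$ along $\hat x$, and read off the lemma from it.

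For the direction $n(r\hat x)\equiv1\Rightarrow$ degeneracy I would simply substitute. If $n\equiv1$ on $[\hat r_0,\hat r_1]$ then the first line of (\ref{118}) is the free Riccati--Bessel equation, and the solution selected by matching the exterior free data furnished by Rellich's expansion is $\hat y_l(r;k)=c(k)\,r\,j_l(kr)$. Then $\hat y_l/r=c(k)j_l(kr)$ is proportional to $j_l(kr)$, the two columns of (\ref{1.12}) are dependent, and the Wronskian vanishes identically; a direct computation with $\hat y_l=c(k)\,r\,j_l(kr)$ gives $\hat\alpha_l(k)=-\frac{1}{kr}\frac{j_l(kr)}{j_l'(kr)}$, so the bracket in (\ref{216}) collapses to zero while $\hat\alpha_l=O(1/k)$ is no longer bounded away from zero. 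This is exactly the alternative regime isolated in the proof of Lemma~\ref{22}.

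The substantive direction is the converse. Assuming $\hat D_l(k;r)\equiv0$ on the segment for all $k$, a Wronskian vanishing on an interval forces, for each fixed $k$, the dependence $\hat y_l(r;k)=C(k)\,r\,j_l(kr)$ with an $r$-independent factor $C(k)$. Inserting this into the two equations the factors obey --- $\hat y_l''+(k^2n-\tfrac{l(l+1)}{r^2})\hat y_l=0$ for $\hat y_l$ and $u''+(k^2-\tfrac{l(l+1)}{r^2})u=0$ for $u=r\,j_l(kr)$ --- and subtracting eliminates the centrifugal and second-derivative terms, leaving $k^2\,(n(r\hat x)-1)\,r\,j_l(kr)\equiv0$. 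Since $r\,j_l(kr)\not\equiv0$ and $k\neq0$, continuity yields $n(r\hat x)=1$ throughout $[\hat r_0,\hat r_1]$.

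The step I expect to be the main obstacle is the rigidity needed to pass from ``$\hat\alpha_l$ is asymptotically negligible'' to the genuine identity $\hat D_l(k;r)\equiv0$, rather than a relation holding only along a sequence of $k$. I would secure this from the analytic dependence of the Sturm--Liouville solutions on $k$ developed in Section~2, which makes $\hat D_l(k;\cdot)$ an entire function of $k$ whose order and type were computed in Lemma~\ref{22}, and then detect the degeneracy from the large-$k$ Liouville/WKB asymptotics associated with (\ref{L}): the phase of $\hat y_l$ accumulates at the rate $[n(r\hat x)]^{1/2}$, giving total $\hat B(r)=\int_0^r[n(\rho\hat x)]^{1/2}d\rho$, whereas that of $j_l(kr)$ accumulates as $r$. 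The two trigonometric ratios forming $\hat\alpha_l$ can cancel to a negligible remainder for all large real $k$ only if $\hat B(r)\equiv r$, i.e. $n(r\hat x)\equiv1$; controlling the common zeros (the poles of those ratios) and the exceptional set $E$ uniformly, together with the subleading Liouville corrections, is the delicate point.
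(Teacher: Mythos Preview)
Your Wronskian reformulation is appealing but not equivalent to the lemma, and this is where the proposal breaks down. If $\hat\alpha_l(k)\equiv0$, i.e.\ $\hat y_l'/\hat y_l=k\,j_l'/j_l$, substituting this directly into (\ref{110}) gives $\hat D_l(k;r)=j_l(kr)\,\hat y_l(r;k)/r^2$, which is \emph{not} identically zero; conversely, your own computation of the easy direction shows that $n\equiv1$ forces $\hat D_l\equiv0$ while $\hat\alpha_l(k)=-j_l(kr)/(kr\,j_l'(kr))\not\equiv0$. So the two vanishing conditions are in fact mutually exclusive in the nontrivial regime, and the clean ODE--subtraction argument you give (which does establish $\hat D_l\equiv0\Leftrightarrow n\equiv1$) simply does not address $\hat\alpha_l\equiv0$. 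The WKB sketch in your last paragraph does not repair this: even if the asymptotic phase comparison were made rigorous, it yields only the single integral constraint $\hat B(r)=r$ at the fixed boundary radius, not the pointwise identity $n(\rho\hat x)\equiv1$ on $[0,r]$.

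The paper takes a genuinely different route for the substantive implication. It keeps the identity $\hat y_l'(r;k)/\hat y_l(r;k)=k\,j_l'(kr)/j_l(kr)$ of meromorphic functions in $k$ at the fixed radius and reads off, by matching poles and zeros, that the Dirichlet spectrum (zeros in $k$ of $\hat y_l(r;\cdot)$) and the Neumann spectrum (zeros of $\hat y_l'(r;\cdot)$) of the perturbed Bessel operator on $[0,r]$ coincide with those of the free operator. It then invokes the two--spectra inverse uniqueness for singular Sturm--Liouville/Bessel problems (the Borg--Levinson--type results of Carlson, and P\"oschel--Trubowitz) to conclude $n\equiv1$. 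That inverse--spectral step supplies precisely the rigidity your argument is missing; your linear--dependence trick would need the Wronskian to vanish for all $r$, which does not follow from information at a single radius when the two profiles satisfy \emph{different} equations.
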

\begin{proof}
We begin with $\frac{{\hat{y}}'_l(\hat{r};k)}{\hat{y}_l(\hat{r};k)}\equiv k\frac{{j}'_l(k\hat{r})}{j_l(k\hat{r})}$ as a meromorphic function in $k$. Referring to \cite{Carlson,Carlson2,Carlson3,Po},
${j}'_l(k\hat{r})$ has zeros asymptotically distributed near the zeros of $\cos(k\hat{r})$; $j_l(k\hat{r})$ near the zero set of $\sin(k\hat{r})$. A similar property holds for ${\hat{y}}'_l(\hat{r};k)$ and $\hat{y}_l(\hat{r};k)$. Hence, whenever ${j}'_l(k\hat{r})$ has a zero, ${\hat{y}}'_l(\hat{r};k)$ has a zero;   $\hat{y}_l(\hat{r};k)$ has one whenever $j_l(k\hat{r})$ has. The two perturbations, $j_l(k\hat{r})$ and $\hat{y}_l(\hat{r};k)$, have the same set of Neumann and Dirichlet eigenvalue.
By the inverse spectral uniqueness of Sturm-Liouville problem \cite{Aktosun,Po}, we have $n\equiv1$. The sufficient condition is obvious. This proves the lemma.

\end{proof}
Thus, Lemma \ref{22} merely describes the eigenvalue density of the problem~(\ref{125}). To describe the density for~(\ref{226}), we may apply the translation invariant properties of interior transmission eigenvalues \cite[Lemma 1.3]{Chen6}. Alternatively, we may consider from the point of view of uniqueness theorem of ODE as in the Introduction: In $[\hat{r}_1,\hat{r}_2]$, except the previous eigenvalues of~(\ref{125}), we consider the new eigenvalue density of the problem from the second interval, that is,
\begin{eqnarray}
\left\{
  \begin{array}{ll}
    \hat{y}_l''(r;k)+(k^2n(r\hat{x})-\frac{l(l+1)}{r^2})\hat{y}_l(r;k)=0,\,\hat{r}_{1}<r<\hat{r}_{2};\vspace{8pt}\\
    \hat{D}_{l}(k;\hat{r}_{1})=0,\, \hat{D}_{l}(k;\hat{r}_{2})=0,
  \end{array}
\right.
\end{eqnarray}
is
\begin{equation}
0,
\end{equation}
because $\frac{\hat{y}_l(r;k)}{r}$ and $j_l(rk)$ satisfy the same differential equation and initial condition at $\hat{r}_1$ until $\hat{r}_2$. Thus, there is only trivial eigenfunctions in $[\hat{r}_{1},\hat{r}_{2}]$.
One can prove inductively the density in each intersection interval, so we state the following theorem.
\begin{theorem}
Let $\hat{N}(\alpha,\beta,R)$ denote the number of the eigenvalues of~(\ref{226})
inside the angle $[\alpha,\beta]$ and $|z|\leq R$ in $\mathbb{C}$. We define the
density function as
\begin{equation}
\hat{\Delta}(\alpha,\beta):=\lim_{R\rightarrow\infty}\frac{\hat{N}(\alpha,\beta,R)}{R}.
\end{equation}
Then
\begin{equation}
\hat{\Delta}(-\epsilon,\epsilon)=\frac{|\hat{r}\chi_D|+\int_{0}^{\infty}n^{\frac{1}{2}}(\rho\hat{x})\chi_Dd\rho}{\pi},
\end{equation}
in which $\chi_D$ is $1$ in $D$; zero otherwise, and $|\cdot|$ is the Lebesgue measure.
\end{theorem}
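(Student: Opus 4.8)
The plan is to cut the ray $\{(\rho,\hat x):\rho\ge0\}$ at the intersection points, treat each resulting segment as a one--interval problem of the kind already analysed in Lemma~\ref{22}, and add up the eigenvalue densities the segments contribute. First I would use $\hat R=\{\hat r_1<\cdots<\hat r_{\hat M}\}$ to decompose $[0,\infty)$ into the consecutive segments $[\hat r_j,\hat r_{j+1}]$, with $\hat r_0:=0$ and $\hat r_{\hat M+1}:=\infty$. Since $0\in D$ and the ray remains outside $D$ past $\hat r_{\hat M}$, these segments alternate between lying inside and outside $D$; the interior ones are exactly $[\hat r_{2i},\hat r_{2i+1}]$, where $n(\rho\hat x)\not\equiv1$, and on the exterior ones $n\equiv1$. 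The density $\hat\Delta(-\epsilon,\epsilon)$ will be obtained as the sum, over the segments, of the densities of the \emph{new} eigenvalues each segment creates in the tunneling construction leading to~(\ref{226}).

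For an interior segment $[\hat r_{2i},\hat r_{2i+1}]$ I would invoke the translation invariance of the interior transmission eigenvalues \cite[Lemma 1.3]{Chen6} to move the reference point to $\hat r_{2i}$, turning the segment into a fresh single--interval problem to which Lemma~\ref{22} applies verbatim. That lemma then shows the associated determinant is of Cartwright class with real--axis type equal to the segment's geometric plus optical length, $(\hat r_{2i+1}-\hat r_{2i})+\int_{\hat r_{2i}}^{\hat r_{2i+1}}[n(\rho\hat x)]^{1/2}\,d\rho$, so the new eigenvalues it contributes have density $\frac{(\hat r_{2i+1}-\hat r_{2i})+\int_{\hat r_{2i}}^{\hat r_{2i+1}}[n(\rho\hat x)]^{1/2}\,d\rho}{\pi}$ in the sector $(-\epsilon,\epsilon)$. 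For an exterior segment $n\equiv1$, so $\hat y_l(r;k)/r$ and $j_l(kr)$ solve the same equation and share Cauchy data at the left endpoint; they therefore coincide across the segment, the Wronskian $\hat D_l(k;\cdot)$ gains no additional zeros (this is the vanishing $\hat\alpha_l(k)\equiv0$ of Lemma~\ref{233}), and the new density is $0$ --- precisely the computation already recorded for $[\hat r_1,\hat r_2]$ just before the statement.

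The hard part will be the additivity of the counting function: I must show $\hat N(-\epsilon,\epsilon,R)$ equals, up to $o(R)$, the sum over interior segments of the individual new--eigenvalue counts. This is where the tunneling construction does the work, exhibiting the eigenvalue set of~(\ref{226}) as the union of the families born in the successive segments --- each eigenvalue, once created, persists through all later segments by the analytic continuation of the Helmholtz equation, while every interior segment adjoins a genuinely new family. Carrying this out inductively, as announced before the statement, and using
\[
\sum_i(\hat r_{2i+1}-\hat r_{2i})=|\{\rho:(\rho,\hat x)\in D\}|=|\hat r\chi_D|,\qquad \sum_i\int_{\hat r_{2i}}^{\hat r_{2i+1}}[n(\rho\hat x)]^{1/2}\,d\rho=\int_0^\infty [n(\rho\hat x)]^{1/2}\chi_D\,d\rho,
\]
I would reach $\hat\Delta(-\epsilon,\epsilon)=\frac{|\hat r\chi_D|+\int_0^\infty[n(\rho\hat x)]^{1/2}\chi_D\,d\rho}{\pi}$. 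The one delicate point is to rule out that the zero sets attached to distinct interior segments overlap on a set of positive density; the uniqueness of the singular Sturm--Liouville continuation of Section~2, together with the persistence of the tunneled eigenvalues, is exactly what ensures the per--segment counts genuinely add rather than merely bound one another.
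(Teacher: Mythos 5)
Your proposal follows essentially the same route as the paper: the paper likewise obtains the density segment by segment, using Lemma~\ref{22} (after moving the reference point via the translation invariance of \cite[Lemma 1.3]{Chen6}) on the interior segments, the ODE--uniqueness argument giving zero new density on the exterior segments where $n\equiv1$, and an inductive summation over the intersection intervals. In fact the paper only sketches this induction in the paragraph preceding the statement, so your write-up --- including the flagged additivity/overlap issue, which the paper does not address --- is if anything more explicit than the original.
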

\par
We elaborate on the eigenvalue tunneling.
\begin{proposition}\label{310}
If $k$ satisfies the following ODE for some
 $l\geq0$:
 \begin{eqnarray}\label{4.1}
\left\{
  \begin{array}{ll}
    \hat{y}_l''(r)+(k^2n(r\hat{x})-\frac{l(l+1)}{r^2})\hat{y}_l(r)=0,\,\hat{r}_0:=0<r<\infty;\vspace{4pt}\\
    \hat{D}_{l}(k;\hat{r}_0)=0;\vspace{4pt}\\
\hat{D}_{l}(k;\hat{r}_1)=0,
  \end{array}
\right.
\end{eqnarray}
then the $k$
is the eigenvalue to the following system  for the same $l$ as well:
\begin{eqnarray}\label{2222}
\left\{
  \begin{array}{ll}
    \hat{y}_l''(r)+(k^2n(r\hat{x})-\frac{l(l+1)}{r^2})\hat{y}_l(r)=0,\,0<r<\infty;\vspace{4pt}\\
    \hat{D}_{l}(k;\hat{r}_{0})=0,\, \hat{D}_{l}(k;\hat{r}_{1})=0,\,\ldots,\hat{D}_{l}(k;\hat{r}_{\hat{M}})=0.
  \end{array}
\right.
\end{eqnarray}
\end{proposition}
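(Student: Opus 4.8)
The plan is to establish the vanishing of $\hat{D}_l(k;\hat{r}_j)$ at every crossing $\hat{r}_j\in\hat{R}$ by induction on $j$, starting from the hypothesis $\hat{D}_l(k;\hat{r}_0)=\hat{D}_l(k;\hat{r}_1)=0$ of~(\ref{4.1}). Fix the incident direction $\hat{x}$ and the order $l$. Since $\hat{D}_l(k;\hat{r}_0)=0$ pins down $\hat{y}_l(\cdot;k)$ through the regular initial data~(\ref{119}), while $\hat{D}_l(k;\hat{r}_1)=0$ makes the Cauchy data of $j_l(k\cdot)$ and $\frac{\hat{y}_l(\cdot;k)}{\cdot}$ proportional at $r=\hat{r}_1$, I would first normalise $\hat{a}_{l,m}=\hat{b}_{l,m}=1$ so that $w_{l,m}$ and $v_{l,m}$ carry identical Cauchy data at $\hat{r}_1$, exactly as in~(\ref{115}). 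The inductive step then splits according to whether the interval $[\hat{r}_j,\hat{r}_{j+1}]$ lies outside or inside $D$.

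The exterior case is immediate and carries no new input. Where $n(r\hat{x})\equiv1$ the functions $j_l(k r)$ and $\frac{\hat{y}_l(r;k)}{r}$ solve one and the same spherical Bessel equation, so proportional Cauchy data at $\hat{r}_j$ propagate to proportional Cauchy data at $\hat{r}_{j+1}$ by uniqueness of the initial value problem for this second order linear ODE, giving $\hat{D}_l(k;\hat{r}_{j+1})=0$. This is precisely the computation already recorded in the density discussion preceding this proposition, which showed that the new eigenvalue density contributed on $[\hat{r}_1,\hat{r}_2]$ is zero.

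The genuine content lies in the interior intervals, where $j_l(k r)$ and $\frac{\hat{y}_l(r;k)}{r}$ obey different equations and matching at $\hat{r}_j$ does not by itself force matching at $\hat{r}_{j+1}$. Here I would invoke the analytic continuation of the Helmholtz equation together with Rellich's lemma, the device used throughout the Introduction to pass between~(\ref{18}) and~(\ref{113}). Concretely, the radial matching on the first exterior interval upgrades, through the radial symmetry of the manmade index, to the coincidence $w_{l,m}\equiv v_{l,m}$ on the open three dimensional shell $\{\hat{r}_1<|x|<\hat{r}_2\}$; since both are solutions of the Helmholtz equation and the exterior of the simple domain $D$ is connected, unique continuation extends this coincidence to the entire exterior of $D$, which is exactly~(\ref{113}). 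Once~(\ref{113}) holds, the $\mathcal{C}^1$ matching of $w_{l,m}$ across $\partial D$ (guaranteed by $n\in\mathcal{C}^2$) transfers the exterior Cauchy data to the interior side at every crossing $\hat{r}_j\in\partial D$, whence $\hat{D}_l(k;\hat{r}_j)=0$ for all $j=0,\dots,\hat{M}$ and $k$ solves~(\ref{2222}).

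The step I expect to be the main obstacle is exactly this propagation through the interior shells: reconciling the per ray, radially symmetric manmade construction with the unique continuation argument on the genuinely connected exterior of $D$. One must verify that the coincidence established along and around the first exterior interval certifies~(\ref{113}) for the true domain rather than only for the fictitious concentric shell geometry, and that no spurious cancellation in the determinant~(\ref{110}) intervenes at a crossing. Carrying this out cleanly — equivalently, showing that every first interval eigenvalue of~(\ref{125}) is an eigenvalue tunneling in the sense of~(\ref{226}) — is where Rellich's lemma and the analyticity of Helmholtz solutions must be used with care.
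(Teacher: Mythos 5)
Your proposal is correct in outline and takes essentially the same route as the paper: both propagate the matched Cauchy data of $j_l(kr)$ and $\hat{y}_l(r;k)/r$ by ODE uniqueness on intervals where the two functions satisfy the same equation, and both appeal to the unique continuation statement~(\ref{113}) for $w_{l,m}(x;k)$ to force $\hat{D}_l(k;\hat{r}_j)=0$ at each successive crossing of $\partial D$. The obstacle you flag --- certifying~(\ref{113}) for the true domain rather than the fictitious concentric-shell geometry --- is exactly the step the paper compresses into the phrase ``by the unique continuation result~(\ref{113}) \ldots and similarly at each intersection point,'' so your explicit exterior/interior induction is a faithful, somewhat more candid, rendering of the paper's own argument.
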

\begin{proof}
Let the eigenvalue $k$ solve the system of~(\ref{4.1}) for some $l\geq0$.
The first two equations there give an entire function in $k$, and the third condition implies that the eigenvalues $k$ form a discrete set in $\mathbb{C}$ \cite{Chen5}.
For this $k$ given by~(\ref{4.1}), we consider the ODE system with the mixed boundary condition:
\begin{eqnarray}\label{3.35}
\left\{
  \begin{array}{ll}
    \hat{y}_l''(r)+(k^2n(r\hat{x})-\frac{l(l+1)}{r^2})\hat{y}_l(r)=0,\,\hat{r}_{1}<r<\hat{r}_{2};\vspace{4pt}\\
    \hat{D}_{l}(k;\hat{r}_{1})=0;\vspace{4pt}\\
\hat{D}_{l}(k;\hat{r}_{2})=0.
  \end{array}
\right.
\end{eqnarray}
The second equation above with $\hat{D}_{l}(k;\hat{r}_{1})=0$  gives a new initial value problem at $\hat{r}_1$, that is,
\begin{eqnarray}
\left\{
  \begin{array}{ll}
    &j_l(k r)|_{r= \hat{r}_1}=\frac{\hat{y}_l(r;k)}{r}|_{r= \hat{r}_1};\vspace{5pt}\\
    &[j_l(k r)]'|_{r= \hat{r}_1}=[\frac{\hat{y}_l(r;k)}{r}]'|_{r= \hat{r}_1}.
  \end{array}
\right.
\end{eqnarray}
Similarly, $\hat{y}_l(\hat{r};k)$ initiates at $\hat{r}_2$ again and then consecutively into the infinity with the same $k$. Moreover,
any eigenvalue $k$ of~(\ref{4.1}) satisfies $\hat{D}_{l}(k;\hat{r}_{2})=0$ in~(\ref{3.35}) by the unique continuation result~(\ref{113}) of $w_{l,m}(x;k)$ and similarly at each intersection points in $\hat{R}$. Hence, the given $k$ satisfies
\begin{eqnarray}
\hat{D}_{l}(k;\hat{r}_{0})=0,\, \hat{D}_{l}(k;\hat{r}_{1})=0,\,\ldots,\hat{D}_{l}(k;\hat{r}_{\hat{M}})=0.
\end{eqnarray}
This proves the lemma.
\end{proof}
In general, we can take $\hat{r}_j$ as the reference point.
\begin{proposition}
If $k$ satisfies the following ODE for some $l\geq0$:
 \begin{eqnarray}\label{4.6}
\left\{
  \begin{array}{ll}
    \hat{y}_l''(r)+(k^2n(r\hat{x})-\frac{l(l+1)}{r^2})\hat{y}_l(r)=0,\,\hat{r}_j<r<\hat{r}_{j+1};\vspace{4pt}\\
    \hat{D}_{l}(k;\hat{r}_j)=0;\vspace{4pt}\\
\hat{D}_{l}(k;\hat{r}_{j+1})=0,
  \end{array}
\right.
\end{eqnarray}
then the $k$
is the eigenvalue to the following system
\begin{eqnarray}\label{2277}
\left\{
  \begin{array}{ll}
    \hat{y}_l''(r)+(k^2n(r\hat{x})-\frac{l(l+1)}{r^2})\hat{y}_l(r)=0,\,\hat{r}_j<r<\infty;\vspace{4pt}\\
    \hat{D}_{l}(k;\hat{r}_j)=0,\, \hat{D}_{l}(k;\hat{r}_{j+1})=0,\,\ldots,\hat{D}_{l}(k;\hat{r}_{\hat{M}})=0.
  \end{array}
\right.
\end{eqnarray}
\end{proposition}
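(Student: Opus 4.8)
The plan is to mirror the argument of Proposition~\ref{310}, now taking $\hat{r}_j$ in place of the origin $\hat{r}_0$ as the reference point; the whole point of the preceding remark is that the construction is invariant under a relabelling of the intersection index, so I expect the earlier proof to transfer almost verbatim. First I would observe that, once the initial data for $\hat{y}_l(r;k)$ is fixed at $\hat{r}_j$ through the condition $\hat{D}_l(k;\hat{r}_j)=0$, the first two lines of~(\ref{4.6}) determine $\hat{y}_l(\cdot;k)$ uniquely on $[\hat{r}_j,\hat{r}_{j+1}]$ and render $\hat{D}_l(k;\hat{r}_{j+1})$ an entire function of $k$; the third line then isolates a discrete set of admissible $k$, exactly as in the starlike model~(\ref{125}).

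Next I would fix one such eigenvalue $k$. The vanishing $\hat{D}_l(k;\hat{r}_{j+1})=0$ says that $j_l(kr)$ and $\hat{y}_l(r;k)/r$, together with their first derivatives, are proportional at $r=\hat{r}_{j+1}$; after the normalization used in~(\ref{16}) this reads as the matching of Cauchy data at $\hat{r}_{j+1}$. I would interpret this matching as the initial data of a fresh initial value problem posed at $\hat{r}_{j+1}$ on the next interval $[\hat{r}_{j+1},\hat{r}_{j+2}]$, so that by uniqueness of the ODE in that interval $\hat{y}_l(\cdot;k)$ is propagated across $[\hat{r}_{j+1},\hat{r}_{j+2}]$ with the \emph{same} $k$.

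The key step is to invoke the unique analytic continuation of the Helmholtz equation recorded in~(\ref{113}): since $w_{l,m}(x;k)$ continues uniquely both outside $D$ and inward along the ray $\hat{x}$, the vanishing of the functional determinant is forced at the far endpoint, giving $\hat{D}_l(k;\hat{r}_{j+2})=0$. An induction on the intersection index then carries the single eigenvalue $k$ through each subsequent interface, producing $\hat{D}_l(k;\hat{r}_m)=0$ for every $m$ with $j\leq m\leq\hat{M}$, which is precisely~(\ref{2277}); the sufficiency direction being trivial, this closes the proposition.

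The main obstacle I anticipate is the rigorous bookkeeping of the propagation across successive penetrable layers: one must check that the Cauchy data handed off at each interface $\hat{r}_m$ is exactly the data compatible with the unique continuation~(\ref{113}), so that no eigenvalue is spuriously created or lost as the ray alternately enters and leaves the perturbation. This is handled precisely as in the tunneling construction of the Introduction (cf.~(\ref{226})), where the uniqueness of each interval's initial value problem, combined with~(\ref{113}), guarantees that the given $k$ survives all the way to $\hat{r}_{\hat{M}}$. Since the tangent points have been discarded by hypothesis on $\hat{R}$, each interval is cleanly either inside or outside $D$, and the induction proceeds without ambiguity.
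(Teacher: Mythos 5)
Your proposal is correct and follows essentially the same route the paper takes: the paper states this proposition without a separate proof, indicating only that one takes $\hat{r}_j$ as the reference point, and the intended argument is exactly the index-shifted version of the proof of Proposition~\ref{310} (entire-function/discreteness of the eigenvalues, Cauchy-data matching at each interface giving a fresh initial value problem, and unique continuation via~(\ref{113}) forcing $\hat{D}_l(k;\hat{r}_m)=0$ at every subsequent intersection point). Your rendering of that induction, including the remark on discarding tangent points, matches the paper's construction.
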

\section{A Proof of Theorem \ref{1.3}}
\begin{proof}
Let $k$ be an eigenvalue of~(\ref{1.1}). Thus, $k$ is an eigenvalue of~(\ref{18}) for some $l$, and then extends toward the origin along some $\hat{x}\in\mathbb{S}^2$ for any $l$. Therefore, it is an eigenvalue of~(\ref{2222}) for some $l$ along some $\hat{x}\in\mathbb{S}^2$. Conversely, if $k$ is an eigenvalues of~(\ref{2222}), then it solves~(\ref{18}) at $|x|=R_0$.  The system~(\ref{18}) is radially symmetrically defined. Thus, with this same $k$, we can solve the ODE~(\ref{2222}) along any other direction. Thus the Fourier coefficients $\{\hat{y}_l(r;k)\}$ is defined for all angle $\hat{x}$. Accordingly, $w(x;k)$ is extended in $\mathbb{R}^3$ from the far fields with the given $k$.

\par
If  $n^i$, $i=1,2,$ are two indices of refraction  with identical interior transmission eigenvalues, then
the spectrum satisfies  $\hat{D}_l(k;\hat{r})=0$ for all $\hat{r}$ and some $l$ for any $\hat{x}$. This is an one dimensional inverse Sturm-Liouville problem \cite{Aktosun,Chen,Chen3,Chen5}. In particular, we start with $j=0$. That is,~(\ref{4.1}) holds for one $l$ along some $\hat{x}\in\mathbb{S}^2$. Hence, inverse uniqueness of the Bessel operator \cite[Theorem\,1.2,\,Theorem\,1.3]{Carlson2} proves that $n^1(r\hat{x})\equiv n^2(r\hat{x})$ in  $[\hat{r}_{0},\hat{r}_{1}]$.
In this case, we have $n^1(r\hat{x})=n^2(r\hat{x})$ for $r\in[\hat{r}_{0},\hat{r}_{1}]$. The argument holds for all $j$ inductively.
This proves Theorem \ref{13}.
\end{proof}

\begin{acknowledgement}
The author wants to thank Prof. Chao-Mei Tu at NTNU for proofreading an earlier version of this manuscript and  anonymous referees for suggesting some references in value distribution theory.
\end{acknowledgement}

\end{document}